\documentclass[11pt,reqno]{amsart}

\usepackage{tikz}
\usetikzlibrary{calc}
\usepackage[T1]{fontenc}
\usepackage{lmodern}
\usepackage[a4paper,margin=28mm]{geometry}
\usepackage{amsmath,amssymb,amsthm,mathtools}
\usepackage[expansion=false]{microtype}
\usepackage[colorlinks=true,linkcolor=blue!50!black,
  citecolor=blue!50!black,urlcolor=blue!50!black]{hyperref}
\usepackage{xcolor}
\usepackage{subfig}
\usepackage{caption}
\hypersetup{
 pdftitle={A geometric characterization of the MS limit},
 pdfauthor={},
 pdfkeywords={}
}

\newtheorem{theorem}{Theorem}[section]

\newtheorem{lemma}[theorem]{Lemma}

\theoremstyle{definition}

\theoremstyle{remark}
\newtheorem{remark}[theorem]{Remark}

\newcommand{\dd}{\,\mathrm{d}}
\newcommand{\R}{\mathbb R}
\newcommand{\D}{\mathcal D}
\newcommand{\E}{\mathcal E}
\newcommand{\F}{\mathcal F}
\newcommand{\norm}[1]{\lVert#1\rVert}
\newcommand{\MS}{Maz'ya--Shaposhnikova}
\newcommand{\av}{\mathsf M}
\newcommand{\doi}[1]{\href{https://doi.org/#1}{\nolinkurl{doi:#1}}}

\numberwithin{equation}{section}
\allowdisplaybreaks[2]
\AtEndDocument{
	\bigskip{\footnotesize
		\textsc{School of Mathematics, Nanjing University, Nanjing 210093, China.}
		\par
		\textsc{School of Mathematics, Hangzhou Normal University, Hangzhou 310036, China}  \par
		\textit{E-mail address}: \texttt{gaojin@hznu.edu.cn} \par
	}
	\bigskip{\footnotesize
		\textsc{College of Science, National University of Defense Technology, Changsha 410073, China}  \par
		\textit{E-mail address}: \texttt{yuzy23@nudt.edu.cn} \par
	}
	\bigskip{\footnotesize
		\textsc{School of Mathematics, South China University of Technology, Guangzhou 510641, China.}  \par
		\textit{E-mail address}: \texttt{summerfish@scut.edu.cn} \par
	}
}

\title[A geometric characterization of the MS limit]
{A geometric characterization of the Maz'ya-Shaposhnikova limit on metric measure spaces}
\author{Jin Gao, Zhenyu Yu and Junda Zhang}
\date{}

\keywords{Maz'ya--Shaposhnikova limit, Gagliardo seminorm,
 Ces\`{a}ro mean, unbounded Sierpi\'nski gasket, metric measure space}

\begin{document}

\begin{abstract}
We provide a purely geometric necessary and sufficient condition, in terms of a certain Ces\`{a}ro mean of the volume function, for the existence of the Maz'ya--Shaposhnikova functional limit with respect to
Gagliardo seminorms on metric measure spaces (under a mild measure condition). As an application, we establish the Maz'ya--Shaposhnikova convergence on the unbounded Sierpi\'nski gasket. We also show that Ahlfors regularity does not imply the existence of this functional limit by constructing a concrete example.
\end{abstract}

\footnote{\textsl{2020 Mathematics Subject Classification.} 28A80, 46E35, 30L99, 40E05}
\maketitle
\enlargethispage{3pt}

\section{Introduction}\label{sec:intro}
Recall the classical `Bourgain-Brezis-Mironescu (BBM) convergence' in \cite[%
Corollary 2]{BBM}:
\begin{equation}
\lim_{s \uparrow 1}(1-s )\int_{D}\int_{D}\frac{|u(x)-u(y)|^{p}}{%
|x-y|^{n+ps }}\dd x\dd y=C_{n,p}\int_{D}|\nabla u(x)|^{p}\dd x \ \ \
(1<p<\infty),  \label{BBM}
\end{equation}
where $D$ is a smooth domain in $\mathbb{R}^{n}$.
We refer the reader to \cite{BSY23} for the BBM convergence on Triebel--Lizorkin spaces, to \cite{DGPYYZ24} in the ball Banach function space setting, to \cite{PP08,Yang,GL20A,GYZ22,GYZ23} in the fractal setting, and to \cite{GYZ26,LPZ24,Shi25} in the general metric measure space setting and the references therein.
Parallel to the BBM convergence \cite{BBM},
Maz'ya and Shaposhnikova \cite{MS} proved that
for $1\le p<\infty$,
\begin{equation}\label{eq:euclidean}
 \lim_{s\downarrow0}s\iint_{\R^n\times\R^n}
 \frac{|u(x)-u(y)|^p}{|x-y|^{n+sp}}\dd x\dd y
 =\frac{2|S^{n-1}|}{p}\norm{u}_{L^p(\R^n)}^p
\end{equation}
for any $u$ with finite positive-order Gagliardo seminorm, where $|S^{n-1}|$ is the area of the unit sphere $S^{n-1}$.
The BBM convergence describes the behaviour of the Gagliardo seminorm (with scaling factor $1-s$) as $s\uparrow 1$, while the Maz'ya--Shaposhnikova (MS) convergence describes the opposite case $s\downarrow0$ (with scaling factor $s$).

The MS convergence has been developed in several directions. A sufficient condition for the above MS convergence on metric measure spaces including unbounded fractals is given in \cite[Theorem~2.2]{HPXZ}, where the above kernel $|x-y|^{-(n+sp)}$ is also generalised to a wide family of mollifiers.
Han \cite{Han} described the geometric structure of the Bourgain--Brezis--Mironescu (BBM) and MS convergence on certain metric measure spaces with gradient structures, where the kernels are general mollifiers. Necessary and sufficient conditions on kernels $\rho_{s}(x-y)$ for the existence of MS limit have also
been investigated in \cite{DDFGP}, where the underlying metric measure space is $\R^n$ (with the standard Lebesgue measure). Gu and Yung \cite{GuYang} switched the Gagliardo seminorm in the left-hand side of \eqref{eq:euclidean} to a certain weak-$L^p$ quasi-norm (the measures of suitable level sets), while the right-hand side remained the $L^p$ norm.

We extend the study of the MS convergence in the following direction.
We focus on the specific classical kernel $d(x,y)^{-(\alpha+sp)}$ for metric measure spaces satisfying a mild measure condition, including a wide range of fractals. The main result is a purely geometric necessary and sufficient condition for the MS convergence, in terms of logarithmic-type Ces\`{a}ro mean of the volume function. We also show that the classical Ahlfors regularity condition does not guarantee such convergence.

Let us precisely state our setting and the main result. Let $(X,d)$ be an unbounded complete separable metric space.  Let
\[
 B_r(x):=\{y\in X:d(x,y)\le r\}
\]
be closed balls throughout this paper. Let $\mu$ be a Borel measure satisfying
\begin{equation}\label{eq:upper-growth}
 0< V_x(r):=V(x,r)=\mu(B_r(x))\le C_Vr^{\alpha}
 \qquad (x\in X,\ r\ge1),
\end{equation}
where $\alpha$ and $C_V$ are two positive constants.

Denote the norm in $L^p:=L^p(X,\mu)\ (1\leq p<\infty)$ by
\begin{equation*}
||u||_p:=\left(\int_X|u(x)|^p \dd \mu(x)\right)^{1/p},
\end{equation*}
and $||u||_{\infty}:=\mathrm{ess\,sup}_{x\in X}|u(x)|$, where $\mathrm{ess\,sup}$ is the essential
supremum.

For $s>0$ and $1\le p<\infty$, define the Gagliardo seminorm with domain
\begin{align}
 [u]_{s,p;\alpha}^p
  &=\iint_{x\ne y}\frac{|u(x)-u(y)|^p}{d(x,y)^{\alpha+sp}}
       \dd\mu(x)\dd\mu(y),\label{eq:seminorm}\\
 \E_s(u)&=s[u]_{s,p;\alpha}^p,\label{eq:energy}\\
 \D_{\alpha,p}
  &=\bigcup_{0<s_0<1}
    \{u\in L^p(X,\mu):[u]_{s_0,p;\alpha}<\infty\}.\label{eq:class}
\end{align}
The major parts, namely the large-scale Gagliardo seminorm and kernel, are denoted by
\begin{align}
 \F_s(u)&=s\iint_{d(x,y)>1}
   \frac{|u(x)-u(y)|^p}{d(x,y)^{\alpha+sp}}
       \dd\mu(x)\dd\mu(y),\label{eq:far-energy}\\
 H_s(x)&=s\int_{d(x,y)>1}d(x,y)^{-\alpha-sp}\dd\mu(y).
       \label{eq:scalar-tail}
\end{align}

Our main result is the following theorem.
\begin{theorem}
\label{thm:main}
Assume \eqref{eq:upper-growth}. Fix $1\le p<\infty$ and $o\in X$. Then the following statements hold.
\begin{enumerate}
\item For every $u\in L^p(X,\mu)$,
\begin{equation}\label{eq:far-reduction}
 \F_s(u)-2H_s(o)\norm{u}_p^p\longrightarrow0
 \qquad \text{as }s\downarrow0.
\end{equation}

\item For every $u\in\D_{\alpha,p}$,\begin{equation}
 \E_s(u)-2H_s(o)\norm{u}_p^p\longrightarrow0
 \qquad \text{as }s\downarrow0.
\end{equation}

\item Let $h_o(t)=e^{-\alpha t}V_o(e^t)$ and $M_o(T)=\frac1T\int_0^T h_o(t)\dd t$. For every function
$u\in\D_{\alpha,p}$ with positive $L^p$-norm, the following three conditions are equivalent:

(a) $\lim_{s\downarrow0}\E_s(u)$ exists;

(b) $\lim_{s\downarrow0}H_s(o)$ exists;

(c) $\lim_{T\to\infty}M_o(T):=\av_{\alpha}(X)$ exists.

Whenever these conditions hold,
\begin{equation}\label{eq:logarithmic-mean}
 \av_{\alpha}(X)=\lim_{R\to\infty}\frac1{\log R}
 \int_1^R\frac{V_o(r)}{r^{\alpha}}\frac{\dd r}{r}
\end{equation}
is independent of the choice of the base
point $o$, and for every $v\in\D_{\alpha,p}$,
\begin{equation}\label{eq:main-limit}
 \lim_{s\downarrow0}\E_s(v)
 =\frac{2\alpha}{p}\av_{\alpha}(X)\norm{v}_p^p.
\end{equation}
\end{enumerate}
\end{theorem}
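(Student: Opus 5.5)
The argument runs in three stages. Part (1) is proved for $u\in L^p$ by density. Part (2) then follows by controlling the near-diagonal energy. Part (3) is a Tauberian-type computation that identifies $H_s(o)$ with an Abel/Laplace mean of $h_o$.

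For (1), first note that $H_s(x)$ is bounded uniformly in $x$ and in small $s$. By the layer-cake formula and \eqref{eq:upper-growth},
\[
H_s(x)=s(\alpha+sp)\int_1^\infty V_x(r)\,r^{-\alpha-sp-1}\dd r - sV_x(1)\le C_V\frac{\alpha+sp}{p}+o(1).
\]
The key geometric observation is that $|H_s(x)-H_s(o)|\to0$ for each fixed $x$. Indeed, $B_{r-d}(o)\subset B_r(x)\subset B_{r+d}(o)$ with $d=d(x,o)$, so $V_x(r)$ is sandwiched between shifts of $V_o$. The factor $s$ kills any bounded range of $r$, and on large $r$ the ratio $(r\pm d)^{-\alpha-sp-1}/r^{-\alpha-sp-1}\to1$.

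Next, expand $|u(x)-u(y)|^p$. The diagonal contributions give $\int |u(x)|^pH_s(x)\dd\mu(x)$ twice, by symmetry. For $u$ with bounded support, dominated convergence together with the pointwise convergence $H_s(x)-H_s(o)\to0$ gives $\int|u|^p(H_s-H_s(o))\to0$.

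The remaining task is to show that $\F_s(u)-2\int|u|^pH_s\to0$. Here I would use the elementary inequality $\bigl||a-b|^p-|a|^p-|b|^p\bigr|\le \varepsilon(|a|^p+|b|^p)+C_\varepsilon\,\Phi(a,b)$, where $\Phi(a,b)$ is a mixed term such as $|a|^{p-1}|b|+|a||b|^{p-1}$. For $u$ with support in a bounded set $K$, the mixed term requires both $x$ and $y$ in $K$. Hence it is at most $s\,\mathrm{diam}(K)^{0}\cdot\|u\|_1$-type quantities times $\sup_{d>1}d^{-\alpha}\le1$, which is $O(s)\to0$ (using $u\in L^p$ on bounded $K$ via truncation to bounded functions as well). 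Letting $\varepsilon\to0$ handles the bounded-support, bounded case.

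General $u\in L^p$ then follows by approximation. Both $\F_s(\cdot)^{1/p}$ (a weighted $L^p$ norm of $u(x)-u(y)$) and $u\mapsto(2H_s(o))^{1/p}\|u\|_p$ are Lipschitz in $L^p$ with constants uniform in small $s$, since $\F_s(w)\le 2^{p}\sup_x H_s(x)\|w\|_p^p$.

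For (2), write $\E_s(u)=\F_s(u)+s\iint_{d(x,y)\le1}\cdots$. For $s<s_0$ the near part is at most $s[u]_{s_0,p;\alpha}^p$, since $d(x,y)^{-\alpha-sp}\le d(x,y)^{-\alpha-s_0p}$ when $d\le1$. This tends to $0$, and (2) follows from (1).

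For (3), the equivalence (a)$\Leftrightarrow$(b) is immediate from (2) and $\|u\|_p>0$. Substituting $r=e^t$ gives
\[
H_s(o)=s(\alpha+sp)\int_0^\infty h_o(t)e^{-spt}\dd t-sV_o(1),
\]
so, with $\lambda=sp$, $H_s(o)\approx\frac{\alpha}{p}\cdot\lambda\int_0^\infty h_o(t)e^{-\lambda t}\dd t$. This is the Abel mean of the bounded function $h_o$. The implication (c)$\Rightarrow$(b) is the standard Abelian theorem (Cesàro implies Abel).

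The converse, (b)$\Rightarrow$(c), is the main obstacle, and it requires a Tauberian condition. I would use Hardy–Littlewood/Karamata for Laplace transforms together with a one-sided condition. From monotonicity of $V_o$, $h_o(t+\tau)\ge e^{-\alpha\tau}h_o(t)$, so $h_o$ is slowly decreasing in the multiplicative sense. More simply, $h_o\ge0$ combined with Karamata's theorem gives that Abel convergence of a nonnegative function implies Cesàro convergence of its integral. Karamata's Tauberian theorem for nonnegative measures $h_o(t)\dd t$ indeed gives $\int_0^T h_o\sim cT$ from $\int e^{-\lambda t}h_o\sim c/\lambda$.

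Finally, \eqref{eq:logarithmic-mean} is $M_o(\log R)$ after the substitution $r=e^t$. Independence of $o$ follows from the pointwise statement $H_s(x)-H_s(o)\to0$. The limit \eqref{eq:main-limit} then follows from (2) with $\lim H_s(o)=\frac{\alpha}{p}\av_\alpha(X)$.
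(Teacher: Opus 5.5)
Your proposal is correct. Its skeleton matches the paper's: the bounded-support case plus $L^p$-density for (1), the bound $s[u]_{s_0,p;\alpha}^p$ on the near-diagonal part for (2), and the identity $H_s(o)=\frac{\alpha+sp}{p}A_{sp}(h_o)-sV_o(1)$ plus a Tauberian theorem for (3). The differences lie in the two key ingredients.

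\emph{Base-point comparison and part (1).} The paper applies the mean value theorem to $t^{-\alpha-sp}$ together with the tail bound $\int_{d(o,y)>R}d(o,y)^{-\alpha-sp-1}\dd\mu(y)\le C_V(\alpha+1)/R$. This gives the quantitative estimate $\sup_{d(o,x)\le M}|H_s(x)-H_s(o)|\le C_Ms$. For $u$ supported in $B_M(o)$, the paper then splits $\F_s(u)$ into two kinds of pairs:
\begin{itemize}
\item pairs inside $B_R(o)$ with $R>2M+2$, which contribute $O(s)$;
\item pairs with one point outside $B_R(o)$, where the integrand is exactly $|u(x)|^p$.
\end{itemize}
This yields the rate $|\F_s(u)-2H_s(o)\norm{u}_p^p|\le Cs\norm{u}_p^p$, with $C$ depending on $M$.

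You instead feed only the inclusions $B_{r-d}(o)\subset B_r(x)\subset B_{r+d}(o)$ into the layer-cake formula. This gives $H_s(x)\to H_s(o)$ pointwise, and you conclude by dominated convergence with dominant $2\sup_{x,s}H_s(x)\,|u|^p$. Your route is softer and avoids any kernel-Lipschitz estimate, but it gives no rate.

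Your $\varepsilon$-inequality with a mixed term $\Phi$ is more than you need. Off $K\times K$ one has $|u(x)-u(y)|^p=|u(x)|^p+|u(y)|^p$ exactly. The $K\times K$ part is $O(s)$ by $|a-b|^p\le2^{p-1}(|a|^p+|b|^p)$ and $\mu(K)<\infty$. As written, your sample $\Phi$ degenerates to $|a|+|b|$ when $p=1$, and is then not supported on $K\times K$; the direct split avoids this.

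\emph{Tauberian step.} The paper proves a self-contained lemma for bounded $h$. It uses a two-sided condition, via $L^1$-density of the span of $\{e^{-kt}\}$ and the uniform bound on $h(T\,\cdot)$. You instead cite Karamata's theorem under the one-sided condition $h_o\ge0$. Both apply because $0\le h_o\le C_V$; the degenerate case $m=0$ is trivial in your version. The paper's argument is elementary and self-contained, while yours rests on the more classical positivity hypothesis at the cost of importing a deeper theorem.
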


We remark that only unbounded spaces are of interest, since it is pointed out in \cite[Remark 2.3]{HPXZ} that $\lim_{s\downarrow0}\E_s(u)=0$ for bounded spaces (with bounded total measure).

Let us briefly outline the proof strategy. It is natural to consider first, functions with bounded support, and then a standard approximation will give the unbounded cases. In view of the previous studies, it is also natural to consider the large-scale influence as $s$ tends to 0, and we therefore introduce the first two inclusions, which hold in general. One may take a bump function on the Euclidean space as an intuitive example for the first inclusion. In the third inclusion, a certain Hardy-Littlewood-Karamata Tauberian result will be used to obtain \eqref{eq:logarithmic-mean}. To show that the limit is independent of the choice of the base point, we establish a substitution comparison (Lemma \ref{lem:tails}), which turns out to be very useful.

We say that $(X,d,\mu)$ is \emph{$\alpha$-Ahlfors regular} if there
are constants $0<c\le C<\infty$ such that
\[
 cr^{\alpha}\le\mu(B_r(x))\le Cr^{\alpha}
 \qquad(x\in X,\ r>0).
\]
In Theorem \ref{thm:counterexample}, we show that Ahlfors regularity does not imply the existence of the above limits. The idea is that Ahlfors regularity allows the constants $c$ and $C$ to vary, thus a careful choice of such variation will lead to great fluctuations in the Ces\`{a}ro mean in Theorem \ref{thm:main}.

The organization is as follows. We prove Theorem \ref{thm:main} in
Section~\ref{sec:proof}.  In Section \ref{sec:counterexample}, we prove Theorem \ref{thm:counterexample}. In
Section~\ref{sec:sg-verification}, we apply part (3) of Theorem \ref{thm:main} on the unbounded Sierpi\'nski gasket to establish the MS convergence result.

\textbf{Notation}: the letters $C$,$C^{\prime }$,$C^{\prime \prime }$ and $c$ are universal positive constants which may vary at each occurrence. The subscripts in the above notation indicate the variables on which the constant depends.

\section{Proof of Theorem \ref{thm:main}}\label{sec:proof}

The following lemma gives the uniform boundedness of
$H_s(\cdot)$ and the difference estimate for
$H_s(\cdot)$ at nearby base points for $0<s\le1$, which are essentially used in parts (1) and (3) of the proof of Theorem \ref{thm:main}.

\begin{lemma}\label{lem:tails}
For $0<s\le1$ and $R\ge1$, the assumption \eqref{eq:upper-growth} implies
\begin{equation}\label{eq:tail-bound}
 \sup_{x\in X}s\int_{d(x,y)>R}d(x,y)^{-\alpha-sp}\dd\mu(y)
 \le C_V\frac{\alpha+sp}{p}R^{-sp}.
\end{equation}
Moreover, for every $M\ge1$, there is a constant $C_M$ independent of
$s\in(0,1]$, such that
\begin{equation}\label{eq:base-point}
 \sup_{d(o,x)\le M}|H_s(x)-H_s(o)|\le C_Ms.
\end{equation}

\end{lemma}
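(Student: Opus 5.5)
Both estimates will come from writing the integrals against the distribution function $r\mapsto V_x(r)$ by the layer-cake (Stieltjes) formula and using \eqref{eq:upper-growth} on $[R,\infty)$ only.

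\textbf{The tail bound \eqref{eq:tail-bound}.} Fix $x$ and put $\phi(r)=r^{-\alpha-sp}$, which is decreasing. Then
\[
\int_{d(x,y)>R}\phi(d(x,y))\dd\mu(y)=\int_R^\infty \phi\,\dd V_x(r)
\le (\alpha+sp)\int_R^\infty V_x(r)\,r^{-\alpha-sp-1}\dd r .
\]
The boundary term $-\phi(R)V_x(R)$ is nonpositive and is dropped; the term at infinity vanishes because $V_x(r)\phi(r)\le C_V r^{-sp}\to0$. Inserting $V_x(r)\le C_Vr^\alpha$ for $r\ge R\ge1$ bounds the right-hand side by
\[
C_V(\alpha+sp)\int_R^\infty r^{-sp-1}\dd r=C_V\frac{\alpha+sp}{sp}R^{-sp}.
\]
Multiplying by $s$ gives \eqref{eq:tail-bound}, uniformly in $x$. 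One could equally write $\phi(d)=\int_d^\infty(\alpha+sp)r^{-\alpha-sp-1}\dd r$ and use Fubini, which avoids Stieltjes boundary terms.

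\textbf{The base-point estimate \eqref{eq:base-point}.} Let $d(o,x)\le M$ and split at radius $R=2M$ (any multiple $\ge M+1$ works). First, the contributions from $\{1<d(x,y)\le 2M\}$ and $\{1<d(o,y)\le 2M\}$ are each at most $s\,V(\cdot,2M)\le sC_V(2M)^\alpha$, since the kernel is at most $1$ there. Second, the far-region differences are handled as follows.
\begin{itemize}
\item The symmetric-difference sets $\{d(x,y)>2M,\ d(o,y)\le 2M\}$ and its counterpart lie in bounded balls, such as $B_{3M}(o)$. Each therefore contributes at most $sC_V(3M)^\alpha$.
\item On the common region $\{d(x,y)>2M,\ d(o,y)>2M\}$, the triangle inequality gives $|d(x,y)-d(o,y)|\le M$, so $d(x,y)/d(o,y)\in[1/2,2]$.
\item By the mean value theorem, $|d(x,y)^{-\beta}-d(o,y)^{-\beta}|\le \beta M 2^{\beta+1}d(o,y)^{-\beta-1}$ with $\beta=\alpha+sp\le\alpha+p$.
\item Hence this region contributes at most $C s\int_{d(o,y)>2M}d(o,y)^{-\alpha-1}\dd\mu(y)$, uniformly in $s$.
\item The last integral is finite by the same layer-cake computation as above with exponent $\alpha+1$: it is bounded by $C_V(\alpha+1)\int_{2M}^\infty r^{-2}\dd r<\infty$.
\end{itemize}
Summing these pieces gives $|H_s(x)-H_s(o)|\le C_Ms$.

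\textbf{Main obstacle.} The key step is the mean-value comparison on the common far region. Its gain of an extra factor $d(o,y)^{-1}$ is what makes the integral converge without relying on the vanishing factor $s$, so the constant stays independent of $s$. The remaining steps are bookkeeping, namely the boundary terms in the Stieltjes integration.
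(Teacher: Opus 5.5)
Your proposal is correct and follows essentially the same route as the paper. The tail bound comes from the layer-cake/Fubini representation of $d^{-\alpha-sp}$ combined with $V_x(r)\le C_Vr^{\alpha}$ for $r\ge R\ge1$. The base-point estimate combines a trivial $O(s)$ bound on a bounded ball with the mean value theorem on the far region, where the extra factor $d(o,y)^{-1}$ makes $\int_{d(o,y)>R}d(o,y)^{-\alpha-1}\dd\mu(y)\le C_V(\alpha+1)/R$ finite uniformly in $s$.

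The only difference is cosmetic. The paper splits once with respect to $B_R(o)$ with $R>2M+2$, so that both indicators $\mathbf 1_{\{d(\cdot,y)>1\}}$ equal $1$ outside $B_R(o)$ and no symmetric-difference sets arise. You instead split each $H_s$ at radius $2M$ about its own center and bound the symmetric differences by $sC_V(3M)^{\alpha}$, which is equally valid.
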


\begin{proof}
Fix $0<s\le1$. Tonelli's theorem and
\eqref{eq:upper-growth} imply that
\begin{align*}
 \int_{d(x,y)>R}d(x,y)^{-\alpha-sp}\dd\mu(y)&= \int_{d(x,y)>R}(\alpha+sp)\int_{d(x,y)}^\infty t^{-\alpha-sp-1}\dd t\dd\mu(y) \\
 &=(\alpha+sp)\int_R^\infty t^{-\alpha-sp-1}
   \mu\{y:R<d(x,y)\le t\}\dd t\\
 &\le (\alpha+sp)C_V\int_R^\infty t^{-sp-1}\dd t\\
 &
 =\frac{(\alpha+sp)C_V}{sp}R^{-sp}\leq\frac{(\alpha+sp)C_V}{p}R^{-sp} ,
\end{align*}
showing \eqref{eq:tail-bound}. Similarly, we will later use the following
\begin{align}
 \int_{d(o,y)>R}d(o,y)^{-\alpha-sp-1}\dd\mu(y)\le C_V\frac{\alpha+sp+1}{sp+1}R^{-sp-1}\le \frac{C_V(\alpha+1)}{R}.
 \label{eq:shift-integrability}
\end{align}

To see the second inclusion, we fix $M\ge1$ and choose $R>2M+2$.  If $d(o,x)\le M$ and $d(o,y)>R$, the triangle inequality yields
\begin{align*}
|d(x,y)-d(o,y)|\le M \leq \frac{d(o,y)}{2}-1.
\end{align*}
Hence, we have
\begin{align*}
\frac{d(o,y)}{2} \le d(x,y)\le \frac{3d(o,y)}{2}.
\end{align*}
Applying the mean value theorem to $t^{-\alpha-sp}$ yields that, for some number $\xi$
between $d(x,y)$ and $d(o,y)$,
\begin{align}
 |d(x,y)^{-\alpha-sp}-d(o,y)^{-\alpha-sp}|
 &=(\alpha+sp)\xi^{-\alpha-sp-1}|d(x,y)-d(o,y)|\notag\\
 &\le (\alpha+p)2^{\alpha+p+1}M d(o,y)^{-\alpha-sp-1},
 \label{eq:kernel-shift}
\end{align}
where we used $0<s\le 1$ and $\xi\ge d(o,y)/2$.  Combining
\eqref{eq:kernel-shift} with \eqref{eq:shift-integrability}, we obtain
\begin{equation}\label{eq:outer-shift}
 \int_{X\setminus B_R(o)}
 |d(x,y)^{-\alpha-sp}-d(o,y)^{-\alpha-sp}|\dd\mu(y)
 \le C_{\alpha,p,M,R}
\end{equation}
uniformly for $0<s\le1$ and $d(o,x)\le M$. Define $K_{s,z}(y)=\mathbf 1_{\{d(z,y)>1\}}d(z,y)^{-\alpha-sp}$. Then,
\begin{align*}
 |H_s(x)-H_s(o)|
 &\le s\int_{B_R(o)}|K_{s,x}(y)-K_{s,o}(y)|\dd\mu(y)\\
 &\quad+s\int_{X\setminus B_R(o)}
       |K_{s,x}(y)-K_{s,o}(y)|\dd\mu(y)\\
 &\le 2s\mu(B_R(o))
   +s\int_{X\setminus B_R(o)}
       |d(x,y)^{-\alpha-sp}-d(o,y)^{-\alpha-sp}|\dd\mu(y)\\
 &\le \bigl(2C_Vr^{\alpha}+C_{\alpha,p,M,R}\bigr)s,
\end{align*}
where we used the fact that $0\le K_{s,z}\le1$ on $B_R(o)$ and \eqref{eq:outer-shift}.  The proof is now complete.
\end{proof}

We are now ready to prove the first two statements in Theorem~\ref{thm:main}.

\begin{proof}[Proof of Theorem~\ref{thm:main} (1) and (2)]
We first consider a function $u$ supported on
$B_M(o)$, where $M\ge1$.  Fix $R>2M+2$.
By symmetry, we can decompose
\begin{align}
 \F_s(u)
 ={}&s\iint_{\substack{x,y\in B_R(o)\\d(x,y)>1}}
 |u(x)-u(y)|^p d(x,y)^{-\alpha-sp}\dd\mu(x)\dd\mu(y)\notag\\
 &+2s\int_{B_M(o)}|u(x)|^p
   \int_{d(o,y)>R}d(x,y)^{-\alpha-sp}\dd\mu(y)\dd\mu(x).
 \label{eq:compact-split}
\end{align}

For the first term in \eqref{eq:compact-split}, since
$|u(x)-u(y)|^p\le2^{p-1}(|u(x)|^p+|u(y)|^p)$, we have
\begin{equation}\label{eq:bounded-piece}
s\iint_{\substack{x,y\in B_R(o)\\d(x,y)>1}}
|u(x)-u(y)|^p d(x,y)^{-\alpha-sp}\dd\mu(x)\dd\mu(y) \leq 2^ps\mu(B_R(o))\norm{u}_p^p.
\end{equation}

We need to estimate the second term in \eqref{eq:compact-split}. For $x\in B_M(o)$, define
\begin{align*}
 T_s(x)=s\int_{d(o,y)>R}d(x,y)^{-\alpha-sp}\dd\mu(y).
\end{align*}
Clearly,
\begin{align*}
 T_s(x)-H_s(o)
 ={}&s\int_{d(o,y)>R}
 \bigl(d(x,y)^{-\alpha-sp}-d(o,y)^{-\alpha-sp}\bigr)\dd\mu(y)\\
 &-s\int_{1<d(o,y)\le R}d(o,y)^{-\alpha-sp}\dd\mu(y).
\end{align*}
The absolute value of the first term above is at most $C_{\alpha,p,M,R}\cdot s$, by
\eqref{eq:outer-shift}, while that of the second term is at most $s\mu(B_R(o))$.  Therefore
\begin{equation}\label{eq:T-uniform}
 \sup_{x\in B_M(o)}|T_s(x)-H_s(o)|\le C'_{\alpha,p,M,R}s.
\end{equation}

Substituting the above estimate \eqref{eq:T-uniform} into \eqref{eq:compact-split} and using
\eqref{eq:bounded-piece}, we have
\begin{equation}\label{eq:compact-rate}
 |\F_s(u)-2H_s(o)\norm{u}_p^p|
 \le C''_{\alpha,p,M,R}s\norm{u}_p^p.
\end{equation}

Next, we consider general functions $u$.  We introduce the measure
\[
 \dd\lambda_s(x,y)=s\mathbf 1_{\{d(x,y)>1\}}
 d(x,y)^{-\alpha-sp}\dd\mu(x)\dd\mu(y).
\]
We write $\F_s(u)^{1/p}=\|u(x)-u(y)\|_{L^p(\lambda_s)}$, and use
Minkowski's inequality to obtain
\begin{equation}\label{eq:Lp-control}
 |\F_s(u)^{1/p}-\F_s(v)^{1/p}|
 \le\F_s(u-v)^{1/p}\le C\norm{u-v}_p,
 \qquad0<s\le1.
\end{equation}
To see the second inequality above, we note that
\begin{align}
 \F_s(z)
 &\le 2^{p-1}s\iint_{d(x,y)>1}
   \frac{|z(x)|^p+|z(y)|^p}{d(x,y)^{\alpha+sp}}
        \dd\mu(x)\dd\mu(y)\notag\\
 &=2^p\int_X|z(x)|^p
   \left(s\int_{d(x,y)>1}d(x,y)^{-\alpha-sp}\dd\mu(y)\right)\dd\mu(x)\notag\\
 &\le 2^p C_V\frac{\alpha+sp}{p}\norm{z}_p^p
 \le 2^p C_V\frac{\alpha+p}{p}\norm{z}_p^p,\label{eq_Fs}
\end{align}
where we used \eqref{eq:tail-bound} with $R=1$ in the second inequality.

Let $v_M=u\mathbf1_{B_M(o)}$.  Using the fact
$|A^{1/p}-B^{1/p}|\le|A-B|^{1/p}$ for $A,B\ge0$ and
applying \eqref{eq:compact-rate}, we obtain
\begin{equation}\label{eq:compact-root}
 \left|\F_s(v_M)^{1/p}-(2H_s(o))^{1/p}\norm{v_M}_p\right|
 \longrightarrow0
 \qquad \text{as }s\downarrow0
\end{equation}
for every fixed $M$.  Applying the triangle inequality, \eqref{eq:Lp-control},
and $|\|u\|_p-\|v_M\|_p|\le\|u-v_M\|_p$, we obtain
\begin{align*}
\left|\F_s(u)^{1/p}-(2H_s(o))^{1/p}\norm{u}_p\right|\le C\norm{u-v_M}_p
 +(2H_s(o))^{1/p}\norm{u-v_M}_p \\
 +\left|\F_s(v_M)^{1/p}
        -(2H_s(o))^{1/p}\norm{v_M}_p\right|.
\end{align*}
It follows that
\[
 \F_s(u)^{1/p}-(2H_s(o))^{1/p}\norm{u}_p\longrightarrow0
\]
 by letting $M\to\infty$ and using
\eqref{eq:compact-root} with
$\|u-v_M\|_p\to0$.
Note that $H_s(o)$ is bounded uniformly for
$0<s\le1$  by
\eqref{eq:tail-bound}, and also $\F_s(u)$ by \eqref{eq_Fs}.
On a uniformly bounded interval the map $t\mapsto t^p$ is Lipschitz, showing Theorem~\ref{thm:main} (1).

Finally, suppose $[u]_{s_0,p;\alpha}<\infty$ for some $0<s_0<1$.  If
$0<s\le s_0$, then we have
\begin{align}
 0\le\E_s(u)-\F_s(u)=s\iint_{0<d(x,y)\le1}
 \frac{|u(x)-u(y)|^p}{d(x,y)^{\alpha+sp}} \dd\mu(x)\dd\mu(y)\le s[u]_{s_0,p;\alpha}^p\label{eq:near-part}
\end{align}
by using the elementary fact $t^{-\alpha-sp}\le t^{-\alpha-s_0p}$ for $0<t\le1$.  The right-hand side
tends to zero, and thus Theorem~\ref{thm:main} (2) follows from (1).
\end{proof}

We will use the following Hardy-Littlewood-Karamata Tauberian result for Laplace-Stieltjes transforms.

\begin{lemma}\label{lem:abel-cesaro}
Let $h$ be a bounded measurable real-valued function on $[0,\infty)$, and let
\begin{equation}\label{e.Abel}
  A_\varepsilon(h)=\varepsilon\int_0^\infty
 e^{-\varepsilon t}h(t)\dd t,\qquad
 M_h(T)=\frac1T\int_0^T h(t)\dd t.
\end{equation}
Then $A_\varepsilon(h)\to m$ as $\varepsilon\downarrow0$ if and only if
$M_h(T)\to m$ as $T\to\infty$.
\end{lemma}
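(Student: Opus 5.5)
The plan is to reduce both directions to the classical Hardy--Littlewood--Karamata theorem for Laplace--Stieltjes transforms of nondecreasing functions, using boundedness of $h$ to supply the Tauberian condition. Put $H(t)=\int_0^t h(\tau)\dd\tau$, so that $M_h(T)=H(T)/T$. Integrating by parts (Fubini suffices, since $h$ is bounded) gives
\[
A_\varepsilon(h)=\varepsilon\int_0^\infty e^{-\varepsilon t}\dd H(t)=\varepsilon^2\int_0^\infty e^{-\varepsilon t}H(t)\dd t .
\]
Since $|H(t)|\le \|h\|_\infty t$, all boundary terms vanish. The Abelian direction ($M_h(T)\to m$ implies $A_\varepsilon(h)\to m$) is then elementary. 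Write $H(t)=mt+o(t)$ and use $\varepsilon^2\int_0^\infty e^{-\varepsilon t}t\dd t=1$. Split the integral at a large $T_0$: the contribution of $[0,T_0]$ is $O(\varepsilon^2T_0^2\|h\|_\infty)\to0$, and the contribution of $[T_0,\infty)$ is at most $\sup_{t\ge T_0}|H(t)/t-m|$.

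For the Tauberian direction, first make the integrator monotone. Let $K=\|h\|_\infty$ and $g=h+K\ge0$. Then $G(t)=\int_0^t g=H(t)+Kt$ is nondecreasing, and $A_\varepsilon(g)=A_\varepsilon(h)+K$. It therefore suffices to show that $A_\varepsilon(g)\to m+K$ implies $G(T)/T\to m+K$. This is exactly Karamata's Tauberian theorem with index $1$:
\[
\int_0^\infty e^{-\varepsilon t}\dd G(t)\sim \frac{m+K}{\varepsilon}\quad(\varepsilon\downarrow0)\ \Longrightarrow\ G(T)\sim (m+K)T\quad(T\to\infty).
\]
This holds whenever $m+K>0$. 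If $m+K=0$, then $g\ge0$ together with $A_\varepsilon(g)\to0$ forces $G(T)/T\le e\,A_{1/T}(g)\to0$ directly, using $e^{-t/T}\ge e^{-1}$ on $[0,T]$.

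The main obstacle is the Karamata step itself, if one wants it self-contained rather than cited. If so, I would run Karamata's polynomial-approximation argument. By linearity, the hypothesis gives $\varepsilon\int_0^\infty e^{-\varepsilon t}P(e^{-\varepsilon t})\,g(t)\dd t\to (m+K)\int_0^1P(x)\dd x$ for every polynomial $P$. Then approximate the indicator $\mathbf 1_{[e^{-1},1]}(x)/x$ from above and below by polynomials in $L^1(\dd x)$, as in the standard Weierstrass sandwich. Positivity of $g$ is what turns these one-sided approximations into the upper and lower bounds for $\varepsilon G(1/\varepsilon)$. Setting $T=1/\varepsilon$ then yields the claim.
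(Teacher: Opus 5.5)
Your proposal is correct. Your Abelian direction is essentially the paper's argument written out by hand: the paper substitutes $v=\varepsilon t$ to get $\int_0^\infty ve^{-v}M_h(v/\varepsilon)\dd v$ and applies dominated convergence, where you split at $T_0$.

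The Tauberian direction takes a genuinely different route. The paper never shifts $h$ and never invokes Karamata. After the substitution $v=Tt$, the hypothesis says exactly that $\int_0^\infty \phi(t)h(Tt)\dd t\to m\int_0^\infty\phi(t)\dd t$ for $\phi(t)=e^{-kt}$, $k\ge1$, and hence for every $\phi$ in their linear span. This span is dense in $L^1(0,\infty)$, by Weierstrass after the change of variables $x=e^{-t}$. Since $\sup_T\norm{h(T\cdot)}_\infty\le\norm{h}_\infty$, the functionals $\phi\mapsto\int_0^\infty\phi(t)h(Tt)\dd t$ are uniformly bounded on $L^1$. The convergence therefore extends to $\phi=\mathbf 1_{[0,1]}$, which is precisely $M_h(T)\to m$.

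\textbf{What each approach buys.} Because $h$ is bounded on both sides, plain two-sided $L^1$ approximation suffices for the paper. It needs no positivity, no polynomial sandwich, and no case distinction. Your argument uses only the lower bound $h\ge -K$, through the positivity of $g=h+K$. It therefore proves the genuinely one-sided Hardy--Littlewood--Karamata statement, valid for $h$ merely bounded below. The price is the one-sided polynomial approximation of $\mathbf 1_{[e^{-1},1]}(x)/x$, or a citation of Karamata's theorem.

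One small remark: your separate treatment of $m+K=0$ is unnecessary if you run the sandwich argument directly, since that argument never uses $m+K>0$. The restriction only enters through the asymptotic-equivalence form of Karamata's theorem. Choosing $K>\norm{h}_\infty$ would also remove it.
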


\begin{proof}

Let $F(t)=\int_0^t h(v)\dd v$. Then $M_h(t)=F(t)/t\leq \|h\|_\infty$. If $M_h(t)\to m$ as $t\to \infty$, integration by parts
and the substitution $v=\varepsilon t$ give
\[
 A_\varepsilon(h)=\varepsilon^2\int_0^\infty
 e^{-\varepsilon t}F(t)\dd t
 =\int_0^\infty ve^{-v}M_h(v/\varepsilon)\dd v\longrightarrow m,
\]
as $\varepsilon\downarrow0$ by the dominated convergence theorem, since the
dominant function is $ve^{-v}\|h\|_\infty$, which belongs to $L^1((0,\infty))$.

Conversely, assume that $A_\varepsilon(h)\to m$ as $\varepsilon\downarrow0$. For each integer $k\ge1$, the substitution $v=Tt$ gives
\[
\int_0^\infty e^{-kt}h(Tt)\dd t
=\frac1k A_{k/T}(h)\longrightarrow\frac{m}{k}
=m\int_0^\infty e^{-kt}\dd t,
\]
as $T\to\infty$.
We will use the standard result that the linear span of $\{e^{-kt}:k\ge1\}$ over $\R$ is dense in
$L^1(0,\infty)$. Indeed, for $f\in C_c([0,\infty))$, define
$g(x)=f(-\log x)/x$ for $0<x\le1$ and $g(0)=0$. Since $f$ has compact support, $g$ is continuous on $[0,1]$. If a polynomial $P$ approximates $g$ uniformly on $[0,1]$, then
\[
\int_0^\infty|f(t)-e^{-t}P(e^{-t})|\dd t
=\int_0^1|g(x)-P(x)|\dd x.
\]
The function $e^{-t}P(e^{-t})$ is a finite linear combination of $e^{-kt}$, $k\ge1$, and the standard result then follows from the Weierstrass approximation and the fact that $C_c([0,\infty))$ is dense in $L^1(0,\infty)$. Since $|h(Tt)|\le\|h\|_\infty$ uniformly for all $T$, we have
\[
M_h(T)=\int_0^1h(Tt)\dd t
=\int_0^\infty\mathbf1_{[0,1]}(t)h(Tt)\dd t
\longrightarrow m\int_0^\infty\mathbf1_{[0,1]}(t)\dd t=m,
\]
showing the desired result.
\end{proof}

We turn to the last part of the proof of Theorem~\ref{thm:main}.

\begin{proof}[Proof of Theorem~\ref{thm:main} (3)]
For $R>1$, Stieltjes integration by parts on $(1,R]$ gives
\[
 \int_{(1,R]}r^{-\alpha-sp}\dd V_o(r)
 =R^{-\alpha-sp}V_o(R)-V_o(1)
  +(\alpha+sp)\int_1^R V_o(r)r^{-\alpha-sp-1}\dd r.
\]
By
\eqref{eq:upper-growth}, $R^{-\alpha-sp}V_o(R)\le C_VR^{-sp}\to0$ as
$R\to\infty$.  Therefore, letting $d(o,y)=r$,
\begin{align}
 H_s(o)
 &=s\int_{(1,\infty)}r^{-\alpha-sp}\dd V_o(r)\notag\\
 &=-sV_o(1)+s(\alpha+sp)\int_1^\infty
      V_o(r)r^{-\alpha-sp-1}\dd r\notag\\
 &=-sV_o(1)+\frac{\alpha+sp}{p}A_{sp}(h_o).
 \label{eq:stieltjes-identity}
\end{align}
where in the last equality we substituted $r=e^t$ and used the definition of $A_{sp}(h_o)$.
Since $h_o(t)=e^{-\alpha t}V_o(e^t)\le C_V$ is bounded,
Lemma~\ref{lem:abel-cesaro} shows that
\begin{equation}\label{e.equi}
M_o(T)\to m \ (T\to\infty ) \Longleftrightarrow \lim_{s\downarrow0}H_s(o)=\frac{\alpha}{p}m,
\end{equation}
which implies the equivalence of conditions (b) and (c) in part (3).

For a fixed function $u$ in part~(3), part~(2) gives
\begin{equation}\label{e.equi2}
  \lim_{s\downarrow0}\left(\E_s(u)-2H_s(o)\norm{u}_p^p\right)=0.
\end{equation}
Since $\|u\|_p^p>0$, this shows the equivalence of conditions (a) and (b) of part (3). Thus, conditions (a), (b), (c) of part (3) are equivalent.

If $\lim_{T\to\infty}M_o(T):=\av_{\alpha}(X)$ exists, then \eqref{eq:logarithmic-mean} follows from direct computation and the existence of $\lim_{s\downarrow0}H_s(o)$. Let $x$ be another chosen base point. Lemma~\ref{lem:tails} together with $M\ge\max\{1,d(o,x)\}$ gives $ \lim_{s\downarrow0}H_s(x)= \lim_{s\downarrow0}H_s(o)$. Applying \eqref{e.equi} at point $x$ shows that the value $\av_{\alpha}(X)$ is independent of the base point.

 Applying \eqref{e.equi} and \eqref{e.equi2} to $v\in\D_{\alpha,p}$ gives \eqref{eq:main-limit}, which completes the proof.
\end{proof}

\section{Ahlfors regularity does not imply the MS convergence}
\label{sec:counterexample}

We present a precise construction by choosing a suitable measure on the half-line to show that Ahlfors regularity does not guarantee the existence of the logarithmic-volume mean or the
\MS{} limit.

The intuition behind our construction is as follows. As we have mentioned, Ahlfors regularity allows the variation of the density constant, and such variation could influence the Ces\`{a}ro mean. The simplest construction may be to choose a two-valued piecewise-constant density function on the half-line. To make the Ces\`{a}ro mean fluctuate, we need to alter the density along a rapid-increasing sequence, so that the constant density in the longest interval becomes the major contribution in the volume integral.

For each $n\ge0$, let
\[
 T_n=2^{2^n},\qquad
 a_n=\begin{cases}1,&n\ \text{even},\\2,&n\ \text{odd}.
 \end{cases}
\]
Define a piecewise-constant function $a:[0,\infty)\to[1,2]$ by
\[
 a(t)=\begin{cases}
 1,&0\le t<T_0,\\
 a_n,&T_n\le t<T_{n+1},\quad n\ge0,
 \end{cases}
\]
and let
\[
 w(x)=\begin{cases}1,&0\le x\le1,\\a(\log x),&x>1.
 \end{cases}
\]
Equip $X=[0,\infty)$ with the Euclidean metric and measure
$\dd\mu(x)=w(x)\dd x$.  In our construction, we regard $\alpha=1$.

\begin{theorem}\label{thm:counterexample}
The metric measure space $(X,|\cdot|,\mu)$ is $1$-Ahlfors
regular.  For $u(x)=(1-x)_+$ and every $1\le p<\infty$, there are two decreasing sequences tending to 0 along
which $\E_s(u)$ converges respectively to $\frac{2}{p(p+1)}$ and $\frac{4}{p(p+1)}$.
Consequently, neither the \MS{} limit nor the logarithmic-volume mean
\eqref{eq:logarithmic-mean} exists.
\end{theorem}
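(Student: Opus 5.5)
Our approach is to check Ahlfors regularity directly, then reduce everything to Theorem~\ref{thm:main}(3) via the Ces\`aro mean $M_o(T)$ with base point $o=0$ and $\alpha=1$.

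\textbf{Ahlfors regularity.} Since $1\le w\le 2$, every closed ball $B_r(x)=[\max(0,x-r),x+r]$ has Lebesgue length between $r$ and $2r$. Hence $r\le\mu(B_r(x))\le 4r$ for all $x\in X$ and $r>0$. In particular \eqref{eq:upper-growth} holds with $\alpha=1$ and $C_V=4$.

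\textbf{Membership of $u$ and its norm.} Take $u(x)=(1-x)_+$. It is Lipschitz and bounded with support in $[0,1]$, so $[u]_{s_0,p;1}<\infty$ for any small $s_0>0$: near the diagonal $|u(x)-u(y)|^p\le|x-y|^p$, which is integrable against $|x-y|^{-1-s_0p}$, and the far part is handled by \eqref{eq_Fs}. Thus $u\in\D_{1,p}$. On $[0,1]$ we have $w=1$, so
\[
\norm{u}_p^p=\int_0^1(1-x)^p\dd x=\frac1{p+1}.
\]
By Theorem~\ref{thm:main}(2), $\E_s(u)-2H_s(0)/(p+1)\to0$. By \eqref{eq:stieltjes-identity}, $H_s(0)=\frac{1+sp}{p}A_{sp}(h_0)+O(s)$. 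So it suffices to show that $A_\varepsilon(h_0)$ has limit points $1$ and $2$ along suitable sequences $\varepsilon_n\downarrow0$. Indeed $\frac{2}{p(p+1)}\cdot1$ and $\frac{2}{p(p+1)}\cdot 2$ are the claimed values.

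\textbf{The main step: oscillation of $h_0$.} We have $h_0(t)=e^{-t}V_0(e^t)=e^{-t}\mu([0,e^t])$. Substituting $x=e^\tau$ gives
\[
h_0(t)=e^{-t}\Bigl(1+\int_0^t a(\tau)e^\tau\dd\tau\Bigr).
\]
This is an exponentially weighted average of $a$ over $[0,t]$, dominated by the last $O(1)$ units of $\tau$. Hence $h_0(t)=a_n+O(e^{-(t-T_n)})$ for $T_n\le t<T_{n+1}$. So $h_0$ is within $o(1)$ of the step function $a$, except on unit-length windows after each $T_n$.

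Now take $T=T_{n+1}$. The interval $[T_n,T_{n+1}]$ has length $2^{2^{n+1}}-2^{2^n}$, which is a fraction $1-o(1)$ of $T_{n+1}$. Therefore $M_0(T_{n+1})\to a_n$. This limit is $1$ along even $n$ and $2$ along odd $n$, so $M_0$ has no limit, and \eqref{eq:logarithmic-mean} fails.

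\textbf{From Ces\`aro means to Abel means.} We still need the Abel means $A_\varepsilon$ to oscillate along a concrete sequence. Lemma~\ref{lem:abel-cesaro} only gives non-existence of the limit, not the specific values, so we estimate directly. Choose $\varepsilon_n=1/(T_nT_{n+1})^{1/2}=2^{-3\cdot2^{n-1}}$. Write
\[
A_{\varepsilon}(h_0)=\int_0^\infty e^{-v}h_0(v/\varepsilon)\dd v .
\]
For $v/\varepsilon_n\in[T_n,T_{n+1}]$ we need $v\in[\varepsilon_nT_n,\varepsilon_nT_{n+1}]=[2^{-2^{n-1}},2^{2^{n-1}}]$. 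This interval expands to $(0,\infty)$. On it $h_0(v/\varepsilon_n)\to a_n$, apart from the exponentially thin boundary window. Boundedness of $h_0$ by $2$ and dominated convergence then give $A_{\varepsilon_n}(h_0)\to a_n$.

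Set $s_n=\varepsilon_n/p$. Then $\E_{s_n}(u)\to\frac{2a_n}{p(p+1)}$. Taking even and odd $n$ gives the two decreasing sequences with limits $\frac{2}{p(p+1)}$ and $\frac{4}{p(p+1)}$, so the \MS{} limit does not exist.

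The main care needed is this last choice of a geometric-mean scale $\varepsilon_n$, together with uniform control of the transition windows of $h_0$; the rest is routine.
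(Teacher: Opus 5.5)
Your proof is correct, and its skeleton matches the paper's: the same Ahlfors bounds $r\le\mu(B_r(x))\le4r$, the same verification of $u\in\D_{1,p}$ and $\norm{u}_p^p=\frac1{p+1}$, the same reduction to $H_s(0)$ via Theorem~\ref{thm:main}(2), and the same scales $\varepsilon_n=(T_nT_{n+1})^{-1/2}$, $s_n=\varepsilon_n/p$.

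The difference is in how you evaluate $H_{s_n}(0)$. You go through the general Stieltjes identity \eqref{eq:stieltjes-identity}, so you need the Abel mean of the volume profile $h_0$. That costs you an extra lemma, $h_0(t)=a_n+O(e^{-(t-T_n)})$ on $[T_n,T_{n+1})$, plus a dominated-convergence argument around the transition windows. When you invoke dominated convergence, apply it to $h_0(v/\varepsilon_n)-a_n$, or separately along even and odd $n$, since $a_n$ changes with $n$.

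The paper instead substitutes $x=e^t$ directly in the definition of $H_s(0)$. This gives the exact identity $H_s(0)=s\int_1^\infty x^{-1-sp}w(x)\dd x=\frac1pA_{sp}(a)$, with no $O(s)$ error. The Abel mean is then taken of the step function $a$ itself, and the bound $|A_{\varepsilon_n}(a)-a_n|\le1-e^{-\varepsilon_nT_n}+e^{-\varepsilon_nT_{n+1}}$ is immediate.

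Your detour does buy something. The explicit computation $M_0(T_{n+1})\to a_n$ verifies the failure of condition (c) directly, whereas the paper deduces it from the equivalence in Theorem~\ref{thm:main}(3). For the stated limits along $s_n$, however, the paper's substitution is shorter and avoids any analysis of $h_0$.
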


\begin{proof}
We first show the 1-Ahlfors regularity. Indeed, for $x\ge0$ and $r>0$,
\begin{align*}
	B_r(x)=[\max\{0,x-r\},x+r].
\end{align*}
Since the density function satisfies $w\in [1,2]$, we obtain the Ahlfors regularity estimate
\begin{equation}\label{eq:ray-ahlfors}
 r\le\mu(B_r(x))\le4r
 \qquad(x\ge0,\ r>0).
\end{equation}

Next, we show that $u\in\D_{1,p}$. Fix $s_0\in(0,1)$. Recall that
$u$ is supported in $[0,1]$, $0\le u\le1$, and
$|u(x)-u(y)|\le|x-y|$. Also $w\le2$, so
$w(x)w(y)\le4$. To estimate $[u]_{s_0,p;1}^p$, we split the integral into two parts: $0<|x-y|\le1$ and
$|x-y|>1$. Using $|u(x)-u(y)|\le|x-y|$ and $w(x)w(y)\le4$,
\[
\begin{aligned}
	&\iint_{0<|x-y|\le1}
	\frac{|u(x)-u(y)|^p}{|x-y|^{1+s_0p}}\dd\mu(x)\dd\mu(y)\\
	&\le 4\int_0^1\int_{\substack{y\ge0\\0<|x-y|\le1}}
	|x-y|^{p(1-s_0)-1}\dd y\dd x < \infty,
\end{aligned}
\]
because $p(1-s_0)>0$, the singularity at $x=y$ is integrable
and the domain is bounded.

Since $u$ vanishes outside $[0,1]$, we have
$|u(x)-u(y)|\le1$. Hence
\[
\begin{aligned}
	&\iint_{|x-y|>1}
	\frac{|u(x)-u(y)|^p}{|x-y|^{1+s_0p}}\dd\mu(x)\dd\mu(y)\\
	&\le 4\int_0^1\int_{x+1}^\infty
	(y-x)^{-1-s_0p}\dd y\dd x
	= \frac{4}{s_0p} < \infty.
\end{aligned}
\]
Combining both estimates yields
$[u]_{s_0,p;1}^p<\infty$, and therefore $u\in\D_{1,p}$.

Finally, we construct two desired sequences $s_n$ corresponding to even and odd integers $n$, respectively, as follows:
\[
 \varepsilon_n:=(T_nT_{n+1})^{-1/2},
 \qquad s_n:=\frac{\varepsilon_n}{p}.
\]
With the notions in \eqref{eq:scalar-tail}, we claim that \begin{equation}\label{eq:tail-subsequences}
 H_{s_n}(0)\longrightarrow\frac{a_n}{p}.
\end{equation}
Indeed, the substitution $x=e^t$ gives
\begin{align}
 H_s(0)
 &=s\int_1^\infty x^{-1-sp}w(x)\dd x=s\int_0^\infty e^{-spt}a(t)\dd t
 =\frac1p A_{sp}(a),
 \label{eq:weighted-tail}
\end{align}
where $A_{sp}(\cdot)$ is defined in \eqref{e.Abel}. Recall that $a=a_n$ on $[T_n,T_{n+1})$ and
$|a-a_n|\le1$ for all other cases. We know that
\begin{align*}
 |A_{\varepsilon_n}(a)-a_n|
 &\le\varepsilon_n\int_0^{T_n}e^{-\varepsilon_nt}\dd t
 +\varepsilon_n\int_{T_{n+1}}^\infty
 e^{-\varepsilon_nt}\dd t\\
 &=1-e^{-\varepsilon_nT_n}
   +e^{-\varepsilon_nT_{n+1}}\longrightarrow0
\end{align*}
by noting that $\varepsilon_nT_n=T_n^{-1/2}\longrightarrow0$ and
$\varepsilon_nT_{n+1}=T_n^{1/2}\longrightarrow\infty$. \eqref{eq:tail-subsequences} then follows from \eqref{eq:weighted-tail}.

Since $w(x)=1$ on $[0,1]$, we have
\begin{equation*}
 \norm{u}_p^p=\int_0^1(1-x)^p\dd x=\frac1{p+1}.
\end{equation*}
Therefore, Theorem~\ref{thm:main} (2) together with
\eqref{eq:tail-subsequences} implies
\[
 \E_{s_n}(u)-\frac{2a_n}{p(p+1)}\longrightarrow0.
\]
The proof is complete by using the definition of $a_n$ and  Theorem~\ref{thm:main} (3).
\end{proof}

\section{MS convergence on the Unbounded Sierpi\'nski Gasket}
\label{sec:sg-verification}

In this section we show that the unbounded Sierpi\'nski gasket provides a natural example for which Theorem~\ref{thm:main} applies.

\begin{figure}[tbph]
	\centering
	\subfloat[$K$]{\includegraphics[width=2cm]{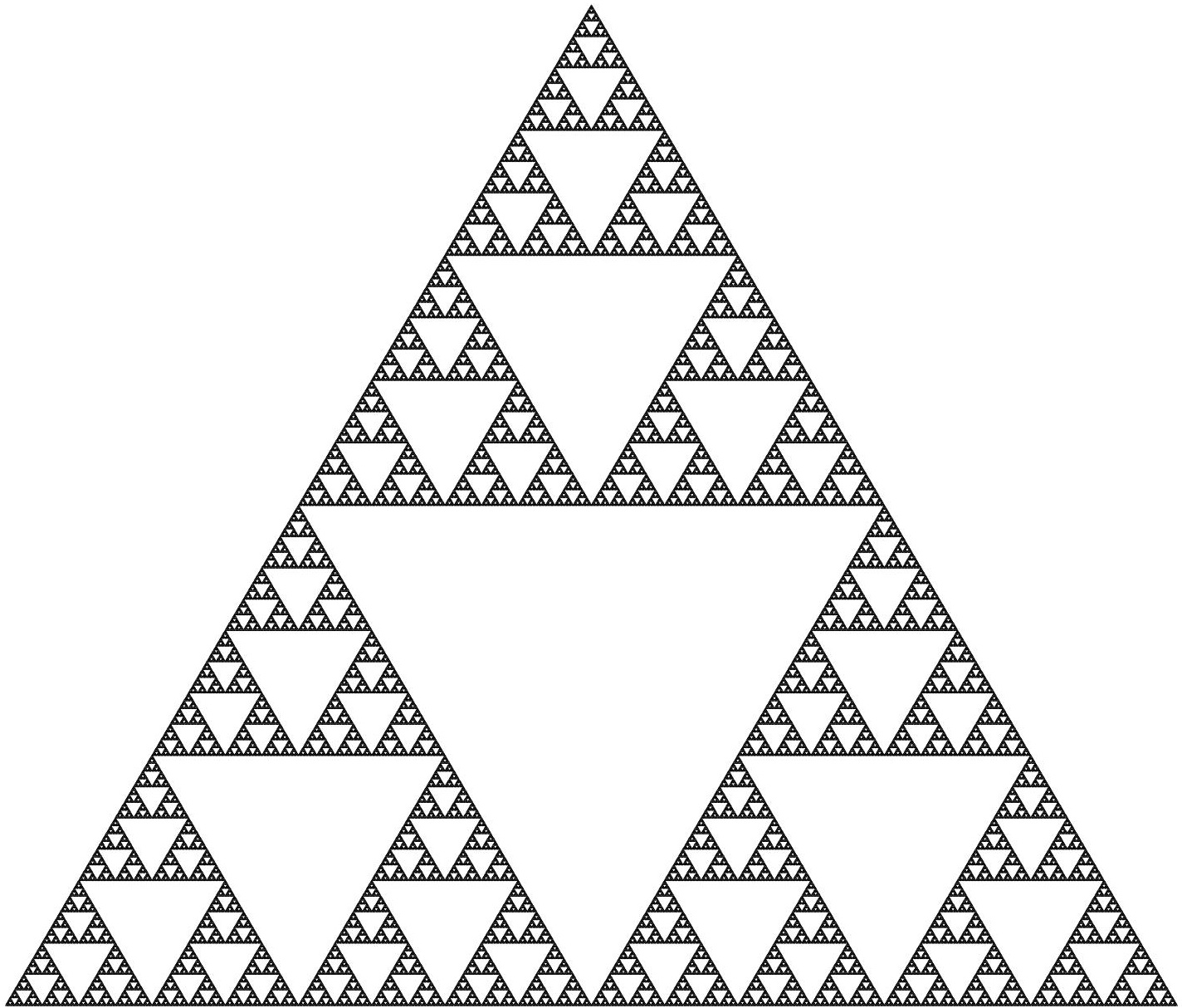}}\quad %
	\subfloat[$2K$]{\includegraphics[width=4cm]{gasket.jpg}}\quad
	\subfloat[$2^2K$]{\includegraphics[width=8cm]{gasket.jpg}}
	\caption{The unbounded Sierpi\'{n}ski gasket $X_{\mathrm{SG}}=\bigcup_{n\ge0}2^nK$ with $n=0,1,2$.}
	\label{fig1}
\end{figure}

Let $K$ be the standard Sierpi\'nski gasket generated by the IFS consisting of the three maps
$$
F_i(z)=\frac{z+p_i}{2},\qquad
p_0=(0,0),\quad p_1=(1,0),\quad p_2=(\frac{1}{2},\frac{\sqrt{3}}{2}).
$$
The Hausdorff dimension of $K$ is
$\alpha=\frac{\log3}{\log2}$, Define the unbounded Sierpi\'nski gasket (see Figure \ref{fig1}) by
$$
X_{\mathrm{SG}}=\bigcup_{n\ge0}2^nK,
$$
equipped with the Euclidean metric $d(x,y)=|x-y|$ and the $\alpha$-dimensional Hausdorff measure $\mu$. Fix the base point $o=p_0$ and write
$$
V_o(r)=\mu(B_r(o)),\qquad
h_o(t)=e^{-\alpha t}V_o(e^t),\qquad
M_o(T)=\frac1T\int_0^T h_o(t)\dd t.
$$
The space $(X_{\mathrm{SG}},d,\mu)$ is $\alpha$-Ahlfors regular (see, for example, \cite[Example 8.2]{Che26}); hence, the growth condition \eqref{eq:upper-growth} in Theorem~\ref{thm:main} is satisfied. The space \(\mathcal{D}_{\alpha,p}\) contains functions with positive $L^p$-norm, for example, the indicator function of the unit ball \(u = \chi_{B(o,1)}\).

Because of the global self-similarity, $D(z)=2z$ is a bijection of $X_{\mathrm{SG}}$ onto itself satisfying $D(o)=o$, and
\begin{align}
	V_o(2r)
	&=\mu(B_{2r}(p_0))
	=\mu(D B_r(p_0))
	=3V_o(r)
	=2^{\alpha} V_o(r).
\end{align}
Therefore
\begin{align*}
	h_o(t+\log2)
	&=e^{-\alpha(t+\log2)}V_o(e^{t+\log2})
	=e^{-\alpha t}2^{-\alpha}V_o(2e^t)\\
	&=e^{-\alpha t}2^{-\alpha}\cdot2^{\alpha} V_o(e^t)
	=h_o(t).
\end{align*}
It follows that
\begin{align*}
	M_o(T)
	&=\frac1T\int_0^T h_o(t)\dd t
	\longrightarrow
\frac{1}{\log2}\int_0^{\log2} h_o(t)\dd t.
\end{align*}
Hence condition (c) in Theorem~\ref{thm:main} (3) holds, and the logarithmic volume mean
\[
\av_{\alpha}(X_{\mathrm{SG}})
=\lim_{T\to\infty}M_o(T)
\]
exists and is finite. By Theorem~\ref{thm:main} (3), for every $u\in\mathcal{D}_{\alpha,p}$,
\begin{align}
	\lim_{s\downarrow0}\mathcal{E}_s(u)
	&=\frac{2\alpha}{p}\av_{\alpha}(X_{\mathrm{SG}})\|u\|_p^p.
\end{align}

\section*{Acknowledgement}
    The authors acknowledge the use of AI tools for language polishing and formatting. Jin Gao was supported by the National Natural Science
	Foundation of China (No. 12271282), and by the Zhejiang Provincial Natural Science
	Foundation of China (No. LQN25A010019). Zhenyu Yu was supported by the Natural
	Science Foundation of Hunan Province, China (No. 2025JJ60039) and by the National University of Defense Technology (No. ZK25-05).


\begin{thebibliography}{99}
\bibitem{BBM}
{\sc J.~Bourgain, H.~Brezis, and P.~Mironescu}, {\em Another look at Sobolev
	spaces, Optimal Control and Partial Differential Equations (J.L. Menaldi et
	al. eds)} (IOS Press, Amsterdam, 2001).

\bibitem{BSY23}
D. Brazke, A. Schikorra and P.-L. Yung, Bourgain-Brezis-Mironescu convergence via Triebel-Lizorkin spaces, \emph{Calc. Var. Partial Differ. Equ.} \textbf{62} (2023), Paper No.~41.

\bibitem{Che26}
A. Chen, Stability of heat kernel bounds under pointed Gromov-Hausdorff convergence, \emph{J. Funct. Anal.} \textbf{291} (2026), 111488.

\bibitem{DGPYYZ24}
F. Dai, L. Grafakos, Z. Pan, D. Yang, W. Yuan and Y. Zhang, The Bourgain-Brezis-Mironescu formula on ball Banach function spaces, \emph{Math. Ann.} \textbf{388} (2024), 1691--1768.

\bibitem{DDFGP}
E. Davoli, G. Di Fratta, R. Giorgio and A. Pinamonti, Necessary and sufficient conditions for the Maz'ya--Shaposhnikova formula in (fractional) Sobolev spaces, preprint, 2025, arXiv:2509.23226.

\bibitem{GYZ22}
J. Gao, Z. Yu and J. Zhang, Convergence of Dirichlet forms and Besov norms on scale irregular Sierpi\'nski gaskets, \emph{Fractals} \textbf{30} (2022), Paper No.~2250163.

\bibitem{GYZ23}
J. Gao, Z. Yu and J. Zhang, Convergence of p-energy forms on homogeneous p.c.f self-similar sets, \emph{Potential Anal.} \textbf{59} (2023), 1851--1874.

\bibitem{GYZ26}
J. Gao, Z. Yu and J. Zhang, Heat kernel-based p-energy norms on metric measure spaces, \emph{Math. Proc. Camb. Phil. Soc.} \textbf{180} (2026), 685--728.

\bibitem{GL20A}
Q. Gu and K.-S. Lau, Dirichlet forms and convergence of Besov norms on self-similar sets, \emph{Ann. Acad. Sci. Fenn. Math.} \textbf{45} (2020), 625--646.

\bibitem{GuYang}
Q. Gu and P.-L. Yung, A new formula for the $L^p$ norm, \emph{J. Funct. Anal.} \textbf{281} (2021), Paper No.~109075.

\bibitem{Han}
B.-X. Han, On the asymptotic behaviour of the fractional Sobolev seminorms: A geometric approach, \emph{J. Funct. Anal.} \textbf{287} (2024), Paper No.~110608.

\bibitem{HPXZ}
B.-X. Han, A. Pinamonti, Z. Xu and K. Zambanini, Maz'ya--Shaposhnikova meet Bishop--Gromov, \emph{Potential Anal.} \textbf{63} (2025), 513--529.

\bibitem{LPZ24}
P. Lahti, A. Pinamonti and X. Zhou, A characterization of BV and Sobolev functions via nonlocal functionals in metric spaces, \emph{Nonlinear Anal.} \textbf{241} (2024), 113467.

\bibitem{MS}
V. Maz'ya and T. Shaposhnikova, On the Bourgain, Brezis, and Mironescu theorem concerning limiting embeddings of fractional Sobolev spaces, \emph{J. Funct. Anal.} \textbf{195} (2002), 230--238; erratum, \emph{J. Funct. Anal.} \textbf{201} (2003), 298--300.

\bibitem{PP08}
K. Pietruska-Pa{\l}uba, Limiting behaviour of Dirichlet forms for stable processes on metric spaces, \emph{Bull. Pol. Acad. Sci. Math.} \textbf{56} (2008), 257--266.

\bibitem{Shi25}
R. Shimizu, Characterizations of Sobolev functions via Besov-type energy functionals in fractals, \emph{Potential Anal.} \textbf{63} (2025), 2121--2156.

\bibitem{Yang}
M. Yang, Equivalent semi-norms of non-local Dirichlet forms on the Sierpi\'nski gasket and applications, \emph{Potential Anal.} \textbf{49} (2018), 287--308.


\end{thebibliography}
\end{document}